\documentclass[12pt]{amsart}

\usepackage[left=2.5cm,right=2.5cm,top=2.8cm,bottom=2.8cm]{geometry}
\usepackage{amsmath,amssymb,amsfonts,amsthm}
\usepackage{mathtools}
\usepackage[backref,colorlinks=true,linkcolor=blue,citecolor=blue,urlcolor=blue]{hyperref}

\newtheorem{theorem}{Theorem}[section]
\newtheorem{lemma}[theorem]{Lemma}

\newtheorem{problem}[theorem]{Problem}
\theoremstyle{definition}
\newtheorem{definition}[theorem]{Definition}
\theoremstyle{remark}
\newtheorem{remark}[theorem]{Remark}

\numberwithin{equation}{section}

\DeclareMathOperator{\Mat}{Mat}

\newcommand{\Q}{\mathbb Q}
\newcommand{\R}{\mathbb R}
\newcommand{\bfe}{\mathbf e}

\title[Edge-deleted strongly regular graphs]{Orthogonal degree-similarity of edge-deleted strongly regular graphs}

\author[Y.-Z. Fan]{Yi-Zheng Fan*}
\address{Center for Pure Mathematics, School of Mathematical Sciences, Anhui University, Hefei 230601, P. R. China}
\email{fanyz@ahu.edu.cn}
\thanks{*Supported by National Natural Science Foundation of China (Grant No. 12331012).}

\author[W. Wang]{Wei Wang$^\sharp$}
\address{School of Mathematics and Statistics, Xi'an Jiaotong University, Xi'an 710049, P. R. China}
\email{wang\_weiw@xjtu.edu.cn}
\thanks{$^\sharp$Corresponding author. Supported by National Natural Science Foundation of China (Grant No. 12371357).}

\author[K. Zhang]{Kuo Zhang}
\address{School of Mathematical Sciences, Anhui University, Hefei 230601, P. R. China}
\email{zhangk@stu.ahu.edu.cn}

\subjclass[2020]{05C50, 15A18, 15A21}
\keywords{Degree-similar graph; orthogonal degree-similarity; strongly regular graph; 1-walk-regular graph; edge deletion; spectral idempotent}

\begin{document}

\begin{abstract}
Godsil and Sun asked whether, for a strongly regular graph $X$ and any two different edges $e$ and $f$, the edge-deleted graphs $X\setminus e$ and $X\setminus f$ are degree-similar.
We give an affirmative answer to the problem of Godsil and Sun.
In fact, we prove the stronger statement that if $X$ is a $1$-walk-regular graph, then for any two edges $e$ and $f$ of $X$, the graphs $X\setminus e$ and $X\setminus f$ are orthogonally degree-similar.
The proof is based on an edge version of the orthogonal-intertwiner method: the equality of the Gram matrices of the projected endpoint vectors in every eigenspace yields an orthogonal matrix commuting with the adjacency matrix and sending one pair of ordered endpoint vectors to the other.
\end{abstract}

\maketitle

\section{Introduction}

Throughout this paper all graphs are finite and simple.
For a graph $G$, let $A(G)$ denote its adjacency matrix and let $D(G)$ denote its degree matrix.  Following Godsil and Sun \cite{GodsilSun}, two graphs $G_1$ and $G_2$ are called \emph{degree-similar} if there exists an invertible matrix $M$ such that
\[
        M^{-1}A(G_1)M=A(G_2),
        \quad
        M^{-1}D(G_1)M=D(G_2).
\]
If such an $M$ can be chosen to be orthogonal, then we say that $G_1$ and $G_2$ are \emph{orthogonally degree-similar} \cite{FanXingZhangWang}.
Orthogonal degree-similarity is a stronger form of degree-similarity and implies simultaneous orthogonal similarity of several standard matrices associated with a graph, including the adjacency matrix, the Laplacian, the signless Laplacian and, when there are no isolated vertices, the normalized Laplacian.

The generalized $\mu$-adjacency matrix of a graph $G$ is
\[
        L_\mu(G):=A(G)-\mu D(G),
\]
and the corresponding $\mu$-polynomial is
\[
        \psi(G,t,\mu):=\det(tI-L_\mu(G)).
\]
If two graphs are degree-similar, then their generalized $\mu$-adjacency matrices are similar over $\Q(\mu)$, and hence the two graphs have the same $\mu$-polynomial.
Wang et al. \cite{WangLiLuXu} asked whether two graphs with same $\mu$-polynomial are degree-similar via an orthogonal matrix $M$.
Godsil and Sun \cite{GodsilSun} gave a negative answer to the problem by constructing pairs of graphs that share the common $\mu$-polynomials but are not degree-similar.
In \cite{GodsilSun}, Godsil and Sun posed several problems concerning degree-similar graphs.
The problem relevant here is the following.

\begin{problem}[Godsil-Sun \cite{GodsilSun}, Problem 10.3]\label{conj:GS}
Let $X$ be a strongly regular graph, and let $e$ and $f$ be two different edges of $X$.  Are $X\setminus e$ and $X\setminus f$ degree-similar?
\end{problem}

The motivation for this problem comes from a result of Godsil, Sun, and Zhang \cite{GodsilSunZhang}: if $X$ is strongly regular, then for any two edges $e$ and $f$, the graphs $X\setminus e$ and $X\setminus f$ are cospectral with respect to the adjacency matrix, the Laplacian, the signless Laplacian and the normalized Laplacian.
Fan, Xing, Zhang, and Wang \cite{FanXingZhangWang} later strengthened the polynomial side of this phenomenon by proving that, for a strongly regular graph $X$, the graphs $X\setminus e$ and $X\setminus f$ have the same $\mu$-polynomial.

In this paper we give an affirmative answer to Problem ~\ref{conj:GS}.
In fact, our result applies to the larger class of $1$-walk-regular graphs.

\begin{theorem}\label{thm:intro-main}
Let $X$ be a $1$-walk-regular graph.  Then for any two edges $e$ and $f$ of $X$, the graphs $X\setminus e$ and $X\setminus f$ are orthogonally degree-similar.
\end{theorem}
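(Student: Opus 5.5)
Write $A=A(X)=\sum_{i}\theta_i E_i$ for the spectral decomposition, with $E_i$ the orthogonal idempotents. Recall that $X$ is $1$-walk-regular when, for every $k$, $(A^k)_{uu}$ is independent of $u$ and $(A^k)_{uv}$ is independent of the edge $uv$. Equivalently, for each eigenvalue $\theta_i$ there are constants $a_i,b_i$ with $(E_i)_{uu}=a_i$ for every vertex $u$ and $(E_i)_{uv}=b_i$ for every edge $uv$. Let $e=uv$ and $f=xy$. Then
\[
A(X\setminus e)=A-\bfe_u\bfe_v^T-\bfe_v\bfe_u^T,\qquad D(X\setminus e)=D-\bfe_u\bfe_u^T-\bfe_v\bfe_v^T,
\]
and $D=kI$ because $1$-walk-regular graphs are regular (take $k=2$ on the diagonal). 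So it suffices to find an orthogonal $Q$ with $Q^TAQ=A$ (equivalently $QA=AQ$), $Q\bfe_x=\bfe_u$ and $Q\bfe_y=\bfe_v$. Such a $Q$ conjugates both rank-two perturbations correctly and fixes $kI$. Concretely,
\[
Q^T(\bfe_u\bfe_v^T+\bfe_v\bfe_u^T)Q=\bfe_x\bfe_y^T+\bfe_y\bfe_x^T,
\]
and similarly for the degree term. Hence $M=Q$ gives $M^{-1}A(X\setminus e)M=A(X\setminus f)$ and $M^{-1}D(X\setminus e)M=D(X\setminus f)$.

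To build $Q$, I work eigenspace by eigenspace and set $Q=\sum_i Q_i$, where $Q_i$ is an orthogonal map of $V_i=\operatorname{col}(E_i)$ to itself, extended by zero on the other eigenspaces. Any such $Q$ is orthogonal and commutes with $A$, since it preserves each eigenspace. We need $Q_iE_i\bfe_x=E_i\bfe_u$ and $Q_iE_i\bfe_y=E_i\bfe_v$ for each $i$; summing over $i$ then gives $Q\bfe_x=\bfe_u$ and $Q\bfe_y=\bfe_v$, because $\sum_i E_i=I$.

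The key step is the following standard fact from Euclidean geometry. If two finite families of vectors $(p_1,p_2)$ and $(q_1,q_2)$ in the same real inner product space $V_i$ have equal Gram matrices, then there is an orthogonal map of $V_i$ sending $p_j$ to $q_j$. One proves this by defining the map on the span via linear combinations; it is well defined and isometric because the Gram matrices agree. Since $\dim\operatorname{span}(p_1,p_2)=\dim\operatorname{span}(q_1,q_2)$, the map then extends by any isometry between the orthogonal complements.

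So it remains to check that the two Gram matrices agree. Using $E_i^TE_i=E_i$, we get $\langle E_i\bfe_x,E_i\bfe_y\rangle=(E_i)_{xy}$. The Gram matrix of $(E_i\bfe_x,E_i\bfe_y)$ is therefore
\[
\begin{pmatrix}a_i&b_i\\ b_i&a_i\end{pmatrix},
\]
and the Gram matrix of $(E_i\bfe_u,E_i\bfe_v)$ is the same, since both $xy$ and $uv$ are edges.

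The main obstacle, mild as it is, is being careful that the $1$-walk-regularity hypothesis gives exactly these constant idempotent entries. This follows because each $E_i$ is a polynomial in $A$, and conversely each $A^k$ is a combination of the $E_i$. I also need regularity, which is what makes the degree matrix transform correctly.
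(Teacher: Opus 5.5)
Your proposal is correct and is essentially the paper's proof. The constant diagonal and edge entries of each $E_i$ give equal Gram matrices for the projected endpoint vectors; these yield isometries on each eigenspace, which assemble into an orthogonal $Q$ commuting with $A$ that carries one ordered edge to the other, and $Q$ then conjugates both the rank-two edge term and $kI-\bfe_u\bfe_u^T-\bfe_v\bfe_v^T$ correctly (the only blemish is cosmetic: $k$ is used both as a walk length and as the valency).
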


Since every strongly regular graph is $1$-walk-regular, Theorem~\ref{thm:intro-main} immediately gives a positive answer to Problem ~\ref{conj:GS}.
The proof uses an edge version of the orthogonal intertwiner, which was used in the study of cospectral vertices \cite{FanZhangWang}.
For vertices, one constructs an orthogonal map on each eigenspace by comparing the projected coordinate vectors.
For edges, one compares the two ordered pairs of projected endpoint vectors.
The $1$-walk-regular condition guarantees that these pairs have the same Gram matrix in every eigenspace, which yields an orthogonal matrix commuting with the adjacency matrix and sending one ordered edge to the other.

\section{Main result}

For a graph $G$ and a vertex $v\in V(G)$, or more generally for a square matrix $M$ and an index $v$, let $\bfe_v$ denote the standard basis column vector indexed by $V(G)$, or by the index set of $M$, respectively.
Thus $\bfe_v$ has its only nonzero entry, equal to $1$, in position $v$.

For two matrices $B$ and $C$ of the same size, let $B\circ C$ denote their Schur product (or Hadamard product), that is,
\[
        (B\circ C)_{u,v}=B_{u,v}C_{u,v}.
\]

\begin{definition}
Let $X$ be a graph with adjacency matrix $A$.
The graph $X$ is called \emph{$1$-walk-regular} if, for every integer $k\geq 0$, there exist scalars $a_k$ and $b_k$ such that
\[
        A^k\circ I=a_k I,
        \qquad
        A^k\circ A=b_k A.
\]
\end{definition}

This is equivalent to the usual definition: the number of closed walks of length $k$ is independent of the vertex, and the number of walks of length $k$ from $u$ to $v$ is independent of the adjacent pair $u,v$.
We shall use the following spectral characterization, which is standard; see, for example, \cite[Theorem 2.1]{GodsilSunZhang}.

\begin{lemma}[\cite{GodsilSunZhang}, Theorem 2.1]\label{lem:spectral-1wr}
Let $X$ be a $1$-walk-regular graph with adjacency matrix $A$, and let
\[
        A=\sum_{\theta\in\Theta}\theta E_\theta
\]
be its spectral decomposition.  Then, for each $\theta\in\Theta$, there exist scalars $\alpha_\theta$ and $\beta_\theta$ such that
\[
        E_\theta\circ I=\alpha_\theta I,
        \quad
        E_\theta\circ A=\beta_\theta A.
\]
Equivalently, the diagonal entries of $E_\theta$ are constant, and the entries of $E_\theta$ on adjacent pairs are constant.
\end{lemma}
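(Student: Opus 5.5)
The statement to be proved is Lemma~\ref{lem:spectral-1wr}: each spectral idempotent $E_\theta$ has constant diagonal and is constant on adjacent pairs. The plan is to express each $E_\theta$ as a polynomial in $A$ and use that the Schur product with a fixed matrix is linear. Since $A$ is real symmetric, its spectral decomposition $A=\sum_{\theta\in\Theta}\theta E_\theta$ has idempotents given by Lagrange interpolation:
\[
        E_\theta=\prod_{\tau\in\Theta,\ \tau\neq\theta}\frac{A-\tau I}{\theta-\tau}.
\]
Expanding the product gives $E_\theta=\sum_{k=0}^{m}c_{\theta,k}A^k$ for real scalars $c_{\theta,k}$, where $m=|\Theta|-1$.

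Next I apply the maps $M\mapsto M\circ I$ and $M\mapsto M\circ A$, both linear in $M$. By the definition of $1$-walk-regularity, $A^k\circ I=a_kI$ and $A^k\circ A=b_kA$ for every $k\ge 0$; note that $k=0$ is included, with $a_0=1$ and $b_0=0$. Hence
\[
        E_\theta\circ I=\Bigl(\sum_{k}c_{\theta,k}a_k\Bigr)I,
        \qquad
        E_\theta\circ A=\Bigl(\sum_{k}c_{\theta,k}b_k\Bigr)A.
\]
Setting $\alpha_\theta=\sum_k c_{\theta,k}a_k$ and $\beta_\theta=\sum_k c_{\theta,k}b_k$ proves the claim.

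For the stated equivalence, $E_\theta\circ I$ is the diagonal part of $E_\theta$, so it equals $\alpha_\theta I$ exactly when the diagonal is constant. Since $A$ is a $0/1$ matrix, $E_\theta\circ A$ records exactly the entries of $E_\theta$ at adjacent pairs, so it equals $\beta_\theta A$ exactly when those entries all equal $\beta_\theta$.

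There is essentially no obstacle. The only points to check are that the interpolation formula really yields the orthogonal projection onto the $\theta$-eigenspace, which follows from diagonalizability of the symmetric matrix $A$, and that the $k=0$ term is covered by the hypothesis.
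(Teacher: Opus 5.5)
Your proposal is correct: writing $E_\theta$ as the Lagrange interpolation polynomial in $A$ and applying the linear maps $M\mapsto M\circ I$ and $M\mapsto M\circ A$ term by term is the standard argument for this fact. The paper gives no proof of its own and simply cites \cite[Theorem 2.1]{GodsilSunZhang}, so your argument supplies the reasoning behind the citation.
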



\begin{remark}
Every strongly regular graph is $1$-walk-regular.  Indeed, if $X$ is strongly regular with parameters $(n,d;a,c)$ and adjacency matrix $A$, then
\[
        A^2=dI+aA+c(J-I-A).
\]
It follows that the algebra generated by $A$ is contained in the span of $I,A,J$.  Therefore every power $A^k$ has constant diagonal entries and constant entries on adjacent pairs.
\end{remark}


We first prove a linear-algebraic lemma.  It is the edge analogue of the usual orthogonal intertwiner associated with cospectral vertices \cite{FanZhangWang}.

\begin{lemma}[Edge intertwiner]\label{lem:edge-intertwiner}
Let $M\in\Mat_n(\R)$ be a symmetric matrix, and let
\[
        M=\sum_{\theta\in\Theta}\theta E_\theta
\]
be its spectral decomposition.  Let $(a,b)$ and $(c,d)$ be two ordered pairs of indices.  Suppose that, for every $\theta\in\Theta$, the two pairs of vectors
\[
        (E_\theta\bfe_a, E_\theta\bfe_b)
        \quad\text{and}\quad
        (E_\theta\bfe_c, E_\theta\bfe_d)
\]
have the same Gram matrix.  Equivalently,
\[
\begin{pmatrix}
\bfe_a^\top E_\theta\bfe_a & \bfe_a^\top E_\theta\bfe_b\\
\bfe_b^\top E_\theta\bfe_a & \bfe_b^\top E_\theta\bfe_b
\end{pmatrix}
=
\begin{pmatrix}
\bfe_c^\top E_\theta\bfe_c & \bfe_c^\top E_\theta\bfe_d\\
\bfe_d^\top E_\theta\bfe_c & \bfe_d^\top E_\theta\bfe_d
\end{pmatrix}
\]
for every $\theta\in\Theta$.
Then there exists an orthogonal matrix $Q\in\Mat_n(\R)$ such that
\[
        QM=MQ, \quad Q\bfe_a=\bfe_c, \quad Q\bfe_b=\bfe_d.
\]
\end{lemma}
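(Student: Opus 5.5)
The plan is to build $Q$ eigenspace by eigenspace and then assemble it as $Q=\sum_{\theta\in\Theta}Q_\theta$. Each $Q_\theta$ will be an orthogonal transformation of the eigenspace $U_\theta:=\operatorname{col}(E_\theta)$, extended by zero on $U_\theta^\perp$. Since $\R^n=\bigoplus_\theta U_\theta$ is an orthogonal decomposition, such a $Q$ is orthogonal. It also commutes with every $E_\theta$, because $Q_\theta E_\theta=Q_\theta=E_\theta Q_\theta$; hence it commutes with $M=\sum\theta E_\theta$.

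Fix $\theta$ and set $x_1=E_\theta\bfe_a$, $x_2=E_\theta\bfe_b$, $y_1=E_\theta\bfe_c$, $y_2=E_\theta\bfe_d$. Since $E_\theta$ is a symmetric idempotent, $\langle E_\theta\bfe_u,E_\theta\bfe_v\rangle=\bfe_u^\top E_\theta\bfe_v$. The hypothesis therefore says exactly that $\langle x_i,x_j\rangle=\langle y_i,y_j\rangle$ for $i,j\in\{1,2\}$.

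The key step is the standard fact that two families of vectors with equal Gram matrices in a Euclidean space $U$ differ by an isometry of $U$. To prove it, define $\phi\colon\operatorname{span}(x_1,x_2)\to\operatorname{span}(y_1,y_2)$ by $\phi(\lambda_1x_1+\lambda_2x_2)=\lambda_1y_1+\lambda_2y_2$. This is well defined because
\[
\|\lambda_1x_1+\lambda_2x_2\|^2=\lambda^\top G\lambda=\|\lambda_1y_1+\lambda_2y_2\|^2,
\]
so any linear relation among the $x_i$ also holds among the $y_i$. The same identity shows $\phi$ is an isometry, and in particular the two spans have equal dimension. Their orthogonal complements inside $U_\theta$ then have equal dimension too, so we extend $\phi$ by any isometry between these complements. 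This gives an orthogonal map $Q_\theta$ of $U_\theta$ with $Q_\theta x_i=y_i$.

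With $Q_\theta$ in hand for every $\theta$, we have $Q\bfe_a=\sum_\theta Q_\theta E_\theta\bfe_a=\sum_\theta E_\theta\bfe_c=\bfe_c$, using $\sum_\theta E_\theta=I$. The same computation gives $Q\bfe_b=\bfe_d$.

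The only delicate point is the well-definedness and dimension matching in the degenerate cases, for example $x_1=0$ or $x_1\parallel x_2$. The Gram-matrix argument above handles these uniformly, so I expect no real obstacle.
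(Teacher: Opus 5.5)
Your proposal is correct and follows the paper's proof essentially step by step. You build an isometry on each eigenspace from the equal Gram matrices (well defined via the norm identity), extend it through isometric bases of the complements inside the eigenspace, and assemble the pieces into an orthogonal $Q$ commuting with $M$ with $Q\bfe_a=\sum_\theta E_\theta\bfe_c=\bfe_c$; you also spell out slightly more explicitly why the assembled $Q$ is orthogonal and commutes with each $E_\theta$.
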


\begin{proof}
Fix $\theta\in\Theta$, and put
\[
        x_1=E_\theta\bfe_a,
        \quad
        x_2=E_\theta\bfe_b,
        \quad
        y_1=E_\theta\bfe_c,
        \quad
        y_2=E_\theta\bfe_d.
\]
By assumption, the two ordered pairs $(x_1,x_2)$ and $(y_1,y_2)$ have the same Gram matrix.
Let
\[
        W_\theta=\operatorname{span}\{x_1,x_2\},
        \quad
        W_\theta'=\operatorname{span}\{y_1,y_2\}.
\]
Define a linear map from $W_\theta$ to $W_\theta'$ initially by
\[
        x_1\mapsto y_1,
        \quad
        x_2\mapsto y_2.
\]
We claim that this gives a well-defined isometry from $W_\theta$ onto $W_\theta'$.
Indeed, if $\lambda_1x_1+\lambda_2x_2=0$, then the equality of Gram matrices gives
\[
        \|\lambda_1y_1+
\lambda_2y_2\|^2
        =
        \|\lambda_1x_1+
\lambda_2x_2\|^2
        =0.
\]
Since the standard inner product on a real vector space is positive definite, $\lambda_1y_1+
\lambda_2y_2=0$.
Thus the map is well-defined.
The same Gram-matrix equality also shows that it preserves inner products.
Hence it is an isometry from $W_\theta$ onto $W_\theta'$.

Extend this isometry to an orthogonal map $Q_\theta$ on the whole $\theta$-eigenspace $\operatorname{im}E_\theta$, by choosing orthonormal bases of the orthogonal complements of $W_\theta$ and $W_\theta'$ and mapping one basis to the other.
Now define
\[
        Q:=\bigoplus_{\theta\in\Theta} Q_\theta
\]
with respect to the orthogonal decomposition of $\R^n$ into eigenspaces of $M$.
Then $Q$ is orthogonal.
Since $Q$ preserves each eigenspace of $M$, it commutes with $M$.
Moreover,
\[
        Q\bfe_a
        =Q\left(\sum_{\theta\in\Theta}E_\theta\bfe_a\right)
        =\sum_{\theta\in\Theta}Q_\theta E_\theta\bfe_a
        =\sum_{\theta\in\Theta}E_\theta\bfe_c
        =\bfe_c.
\]
Similarly $U\bfe_b=\bfe_d$.
This proves the lemma.
\end{proof}

The next lemma shows that the hypothesis of Lemma~\ref{lem:edge-intertwiner} is automatic for ordered edges in a $1$-walk-regular graph.

\begin{lemma}\label{lem:ordered-edges}
Let $X$ be a $1$-walk-regular graph with adjacency matrix $A$, and let
\[
        A=\sum_{\theta\in\Theta}\theta E_\theta
\]
be its spectral decomposition.  Let $(a,b)$ and $(c,d)$ be two ordered edges of $X$.  Then, for every $\theta\in\Theta$, the two pairs
\[
        (E_\theta\bfe_a, E_\theta\bfe_b)
        \quad\text{and}\quad
        (E_\theta\bfe_c,\ E_\theta\bfe_d)
\]
have the same Gram matrix.
\end{lemma}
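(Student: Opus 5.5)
The argument reduces to computing the four Gram-matrix entries directly and then invoking Lemma~\ref{lem:spectral-1wr}. Each $E_\theta$ is a real symmetric idempotent, so $E_\theta^\top E_\theta=E_\theta^2=E_\theta$. Hence for any indices $u,v$,
\[
        \langle E_\theta\bfe_u, E_\theta\bfe_v\rangle=\bfe_u^\top E_\theta^\top E_\theta\bfe_v=\bfe_u^\top E_\theta\bfe_v=(E_\theta)_{u,v}.
\]
Consequently, the Gram matrix of $(E_\theta\bfe_a,E_\theta\bfe_b)$ is exactly the $2\times 2$ principal submatrix of $E_\theta$ indexed by $(a,b)$:
\[
\begin{pmatrix}
(E_\theta)_{a,a} & (E_\theta)_{a,b}\\
(E_\theta)_{b,a} & (E_\theta)_{b,b}
\end{pmatrix}.
\]
The analogous statement holds for the pair $(c,d)$.

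Next we apply Lemma~\ref{lem:spectral-1wr}. Since $E_\theta\circ I=\alpha_\theta I$, the diagonal entries satisfy $(E_\theta)_{a,a}=(E_\theta)_{b,b}=(E_\theta)_{c,c}=(E_\theta)_{d,d}=\alpha_\theta$. Since $a\sim b$ and $c\sim d$, we have $A_{a,b}=A_{b,a}=A_{c,d}=A_{d,c}=1$. The identity $E_\theta\circ A=\beta_\theta A$ then gives $(E_\theta)_{a,b}=(E_\theta)_{b,a}=(E_\theta)_{c,d}=(E_\theta)_{d,c}=\beta_\theta$.

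Both Gram matrices are therefore equal to
\[
\begin{pmatrix}\alpha_\theta&\beta_\theta\\ \beta_\theta&\alpha_\theta\end{pmatrix},
\]
which proves the lemma. There is no real obstacle here. The only step that needs care is the identity $E_\theta^\top E_\theta=E_\theta$, which holds because the spectral idempotents of a real symmetric matrix are orthogonal projections.
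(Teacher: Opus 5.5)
Your proof is correct and follows the paper's argument: both apply Lemma~\ref{lem:spectral-1wr} to get that the diagonal entries of $E_\theta$ equal $\alpha_\theta$ and the entries on adjacent pairs equal $\beta_\theta$, so both Gram matrices equal $\begin{pmatrix}\alpha_\theta&\beta_\theta\\ \beta_\theta&\alpha_\theta\end{pmatrix}$. Your explicit check that $\langle E_\theta\bfe_u,E_\theta\bfe_v\rangle=(E_\theta)_{u,v}$ via $E_\theta^\top E_\theta=E_\theta$ spells out the identification the paper takes for granted in the statement of Lemma~\ref{lem:edge-intertwiner}.
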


\begin{proof}
By Lemma~\ref{lem:spectral-1wr}, for each $\theta\in\Theta$ there exist scalars $\alpha_\theta$ and $\beta_\theta$ such that
\[
        E_\theta\circ I=\alpha_\theta I,
        \qquad
        E_\theta\circ A=\beta_\theta A.
\]
Since $a\sim b$ and $c\sim d$, we have
\[
        \bfe_a^\top E_\theta\bfe_a
        =\bfe_b^\top E_\theta\bfe_b
        =\bfe_c^\top E_\theta\bfe_c
        =\bfe_d^\top E_\theta\bfe_d
        =\alpha_\theta,
\]
and
\[
        \bfe_a^\top E_\theta\bfe_b
        =\bfe_c^\top E_\theta\bfe_d
        =\beta_\theta.
\]
As $E_\theta$ is symmetric, the transpose entries are equal as well.  Hence both Gram matrices are
\[
        \begin{pmatrix}
        \alpha_\theta & \beta_\theta\\
        \beta_\theta  & \alpha_\theta
        \end{pmatrix}.
\]
The result follows.
\end{proof}


For an edge $e=\{a,b\}$ of a graph $X$, write $X\setminus e$ for the graph obtained from $X$ by deleting the edge $e$ but keeping all vertices.

We now arrive at the main result, a restatement of Theorem \ref{thm:intro-main}.
Sine every strongly regular graph is $1$-walk-regular, we answer Problem \ref{conj:GS} affirmatively.

\begin{theorem}
\label{thm:main}
Let $X$ be a $1$-walk-regular graph, and let $e$ and $f$ be two edges of $X$.
Then $X\setminus e$ and $X\setminus f$ are orthogonally degree-similar.
More precisely, if $e=\{a,b\}$ and $f=\{c,d\}$, then there exists an orthogonal matrix $Q$ such that
\[
        Q A(X\setminus e)Q^\top=A(X\setminus f)
\]
and
\[
        Q D(X\setminus e)Q^\top=D(X\setminus f).
\]
\end{theorem}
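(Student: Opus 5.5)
The plan is to combine Lemma~\ref{lem:ordered-edges} with Lemma~\ref{lem:edge-intertwiner} and then check how the resulting orthogonal matrix acts on the rank-two perturbations that edge deletion causes. Write $A=A(X)$ and $D=D(X)$. Treat $(a,b)$ and $(c,d)$ as ordered edges of $X$. By Lemma~\ref{lem:ordered-edges}, for every $\theta\in\Theta$ the pairs $(E_\theta\bfe_a,E_\theta\bfe_b)$ and $(E_\theta\bfe_c,E_\theta\bfe_d)$ have the same Gram matrix. Lemma~\ref{lem:edge-intertwiner}, applied with $M=A$, then gives an orthogonal $Q$ with
\[
QA=AQ,\qquad Q\bfe_a=\bfe_c,\qquad Q\bfe_b=\bfe_d.
\]
In particular $QAQ^\top=A$.

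Next we express the deleted graphs through these vectors. We have
\[
A(X\setminus e)=A-\bfe_a\bfe_b^\top-\bfe_b\bfe_a^\top,\qquad A(X\setminus f)=A-\bfe_c\bfe_d^\top-\bfe_d\bfe_c^\top .
\]
Since $Q\bfe_a\bfe_b^\top Q^\top=(Q\bfe_a)(Q\bfe_b)^\top=\bfe_c\bfe_d^\top$, and similarly for the transposed term, conjugation by $Q$ carries $A(X\setminus e)$ to $A(X\setminus f)$.

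For degrees, the key point is that a $1$-walk-regular graph is regular, since $A^2\circ I=a_2I$ says every vertex has degree $a_2=k$. Hence
\[
D(X\setminus e)=kI-\bfe_a\bfe_a^\top-\bfe_b\bfe_b^\top,\qquad D(X\setminus f)=kI-\bfe_c\bfe_c^\top-\bfe_d\bfe_d^\top .
\]
Because $QQ^\top=I$, conjugating the first expression by $Q$ gives $kI-\bfe_c\bfe_c^\top-\bfe_d\bfe_d^\top=D(X\setminus f)$.

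There is no real obstacle here. The substantive work is already done in Lemmas~\ref{lem:edge-intertwiner} and~\ref{lem:ordered-edges}. The only points to state explicitly are the regularity of $X$, which is what makes $D$ a scalar matrix invariant under conjugation, and the fact that $Q$ maps the endpoints of $e$ to the endpoints of $f$ in a fixed order, so that both the off-diagonal and the diagonal rank-one terms match.
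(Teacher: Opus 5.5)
Your proposal is correct and follows the paper's proof essentially step for step. You obtain $Q$ from Lemmas~\ref{lem:ordered-edges} and~\ref{lem:edge-intertwiner}, conjugate the rank-two perturbation $\bfe_a\bfe_b^\top+\bfe_b\bfe_a^\top$, and use regularity to write each degree matrix as $kI$ minus two rank-one terms. Your explicit justification of regularity, via $(A^2)_{vv}=\deg(v)$ together with $A^2\circ I=a_2I$, is a small but welcome addition, since the paper only asserts it.
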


\begin{proof}
Let $A=A(X)$, and choose an ordering of the endpoints of $e$ and $f$, say
\[
        (a,b),
        \quad
        (c,d).
\]
By Lemmas~\ref{lem:edge-intertwiner} and \ref{lem:ordered-edges}, there exists an orthogonal matrix $Q$ such that
\[
        QA=AQ,
        \quad
        Q\bfe_a=\bfe_c,
        \qquad
        Q\bfe_b=\bfe_d.
\]
Since $Q$ is orthogonal and commutes with $A$, we have
\[
        QAQ^\top=A.
\]
Now put
\[
        B_e:=\bfe_a\bfe_b^\top+\bfe_b\bfe_a^\top,
        \quad
        B_f:=\bfe_c\bfe_d^\top+\bfe_d\bfe_c^\top.
\]
Then
\[
        A(X\setminus e)=A-B_e,
        \qquad
        A(X\setminus f)=A-B_f.
\]
Using $Q\bfe_a=\bfe_c$ and $Q\bfe_b=\bfe_d$, we get
\[
        QB_eQ^\top=B_f.
\]
Therefore
\[
        Q A(X\setminus e)Q^\top
        =Q(A-B_e)Q^\top
        =A-B_f
        =A(X\setminus f).
\]

It remains to check the degree matrices.  Since $X$ is $1$-walk-regular, it is regular; let its degree be $r$.  Deleting the edge $e=\{a,b\}$ decreases precisely the degrees of $a$ and $b$ by one.  Hence
\[
        D(X\setminus e)=rI-\bfe_a\bfe_a^\top-\bfe_b\bfe_b^\top.
\]
Similarly,
\[
        D(X\setminus f)=rI-\bfe_c\bfe_c^\top-\bfe_d\bfe_d^\top.
\]
Thus
\[
        Q D(X\setminus e)Q^\top
        =rI-\bfe_c\bfe_c^\top-\bfe_d\bfe_d^\top
        =D(X\setminus f).
\]
This proves the theorem.
\end{proof}


%

\begin{remark}
Theorem~\ref{thm:main} strengthens the known cospectrality results for edge-deleted strongly regular graphs.
Godsil, Sun, and Zhang \cite{GodsilSunZhang} proved that, for a strongly regular graph $X$, the graphs $X\setminus e$ and $X\setminus f$ are cospectral with respect to the adjacency matrix, the Laplacian, the signless Laplacian, and the normalized Laplacian.
Fan, Xing, Zhang, and Wang \cite{FanXingZhangWang} proved that the same graphs have the same $\mu$-polynomial.
The theorem above gives a simultaneous orthogonal similarity between the adjacency and degree matrices of the two edge-deleted graphs.  Consequently,
\[
        Q L_\mu(X\setminus e) Q^\top=L_\mu(X\setminus f),
\]
so the equality of $\mu$-polynomials follows immediately.
\end{remark}

\begin{remark}
The orthogonal matrix $U$ constructed in the proof need not be a permutation matrix.  Thus the result does not assert that $X\setminus e$ and $X\setminus f$ are isomorphic.  Indeed, examples in \cite{GodsilSunZhang} give large families of pairwise non-isomorphic edge-deleted graphs arising from strongly regular graphs, even though the present theorem shows that they are all orthogonally degree-similar.
\end{remark}

\begin{remark}
Let $G$ be a graph.
Two vertices $u$ and $v$ are called \emph{cospectral} for adjacency matrix  if
\[   \det(tI-G\setminus u)=\det(tI-G\setminus v),\]
where $G\setminus u$ denotes the subgraph of $G$ by deleting the vertex $u$ together with its incident edges.
The idea of cospectral vertices goes back to Schwenk \cite{Schwenk}, who used it to construct non-isomorphic cospectral trees.
The characterization of cospectral vertices was established in \cite{GodsilMcKay,GodsilSmith}.
In \cite{FanZhangWang}, the authors give a new characterization of cospectral vertices, that is, there exists an orthogonal intertwiner commuting with the adjacency matrix and sending one coordinate vector to the other.

Similarly, one can define the cospectral edges for adjacency matrix, that is, two edges $e$ and $f$ are called \emph{cospectral} for adjacency matrix  if
\[   \det(tI-G\setminus e)=\det(tI-G\setminus f).\]
If the two ordered pairs of projected endpoint vectors form the same Gram matrix in every eigenspace, then we have an orthogonal intertwiner commuting with the adjacency matrix and sending one pair of ordered endpoint vectors to the other; see Lemma \ref{lem:edge-intertwiner} and Lemma \ref{lem:ordered-edges}.
So, the cospectral edges may have potential use in cospectral graph construction.
\end{remark}

\end{document}